\let\oldeqref\eqref
\RenewDocumentCommand\eqref{s m}{%
  \IfBooleanTF#1%
  {\textup{\tagform@{\ref*{#2}}}}
  {\oldeqref{#2}}
}
\newcommand{\Bir}{{\operatorname{Bir}}}
\newcommand{\sC}{{\mathcal C}}
\newcommand{\sD}{{\mathcal D}}
\newcommand{\sF}{{\mathcal F}}
\newcommand{\sH}{{\mathcal H}}
\newcommand{\sM}{{\mathcal M}}
\newcommand{\sS}{{\mathcal S}}
\newcommand{\sX}{{\mathcal X}}
\newcommand{\N}{{\mathbb N}}
\newcommand{\Q}{{\mathbb Q}}
\newcommand{\Z}{{\mathbb Z}}
\newcommand{\Aut}{\operatorname{Aut}}
\newcommand{\NS}{\operatorname{NS}}
\newcommand{\Pic}{\operatorname{Pic}}
\newcommand{\supp}{\operatorname{supp}}
\renewcommand{\to}[1][]{\xrightarrow{\ #1\ }}
\newcommand*{\da@rightarrow}{\mathchar"0\hexnumber@\symAMSa 4B }
\newcommand*{\da@leftarrow}{\mathchar"0\hexnumber@\symAMSa 4C }
\newcommand*{\xdashrightarrow}[2][]{%
  \mathrel{%
    \mathpalette{\da@xarrow{#1}{#2}{}\da@rightarrow{\,}{}}{}%
  }%
}
\newcommand{\xdashleftarrow}[2][]{%
  \mathrel{%
    \mathpalette{\da@xarrow{#1}{#2}\da@leftarrow{}{}{\,}}{}%
  }%
}
\newcommand*{\da@xarrow}[7]{%
  \sbox0{$\ifx#7\scriptstyle\scriptscriptstyle\else\scriptstyle\fi#5#1#6\m@th$}%
  \sbox2{$\ifx#7\scriptstyle\scriptscriptstyle\else\scriptstyle\fi#5#2#6\m@th$}%
  \sbox4{$#7\dabar@\m@th$}%
  \dimen@=\wd0 %
  \ifdim\wd2 >\dimen@
    \dimen@=\wd2 %
  \fi
  \count@=2 %
  \def\da@bars{\dabar@\dabar@}%
  \@whiledim\count@\wd4<\dimen@\do{%
    \advance\count@\@ne
    \expandafter\def\expandafter\da@bars\expandafter{%
      \da@bars
      \dabar@ 
    }%
  }%
  \mathrel{#3}%
  \mathrel{%
    \mathop{\da@bars}\limits
    \ifx\\#1\\%
    \else
      _{\copy0}%
    \fi
    \ifx\\#2\\%
    \else
      ^{\copy2}%
    \fi
  }%
  \mathrel{#4}%
}
\newtheoremstyle{citing}
  {}
  {}
  {\itshape}
  {}
  {\bfseries}
  {\textbf{.}}
  {.5em}
  {\thmnote{#3}}
\theoremstyle{plain}
\newtheorem{theorem}[subsection]{Theorem}
\newtheorem{lemma}[subsection]{Lemma}
\newtheorem{corollary}[subsection]{Corollary}
\newtheorem{regprin}[subsection]{Regeneration principle}
\newtheorem{proposition}[subsection]{Proposition}
\theoremstyle{remark}
\theoremstyle{definition}
\newtheorem{definition}[subsection]{Definition}
\newtheorem{hypothesis}[subsection]{Hypothesis}
\numberwithin{equation}{section}
\theoremstyle{remark}
\newtheorem{remark}[subsection]{Remark}
\theoremstyle{citing}
\newsavebox\myboxA
\newsavebox\myboxB
\newlength\mylenA
\newcommand*\xtilde[2][0.8]{%
    \sbox{\myboxA}{$\m@th#2$}%
    \setbox\myboxB\null
    \ht\myboxB=\ht\myboxA%
    \dp\myboxB=\dp\myboxA%
    \wd\myboxB=#1\wd\myboxA
    \sbox\myboxB{$\m@th\widetilde{\copy\myboxB}$}
    \setlength\mylenA{\the\wd\myboxA}
    \addtolength\mylenA{-\the\wd\myboxB}%
    \ifdim\wd\myboxB<\wd\myboxA%
       \rlap{\hskip 0.5\mylenA\usebox\myboxB}{\usebox\myboxA}%
    \else
        \hskip -0.5\mylenA\rlap{\usebox\myboxA}{\hskip 0.5\mylenA\usebox\myboxB}%
    \fi}
\newbox\usefulbox
\def\getslant #1{\strip@pt\fontdimen1 #1}
\def\xxtilde #1{\mathchoice
 {{\setbox\usefulbox=\hbox{$\m@th\displaystyle #1$}%
    \dimen@ \getslant\the\textfont\symletters \ht\usefulbox
    \divide\dimen@ \tw@ 
    \kern\dimen@ 
    \xtilde{\kern-\dimen@ \box\usefulbox\kern\dimen@ }\kern-\dimen@ }}
 {{\setbox\usefulbox=\hbox{$\m@th\textstyle #1$}%
    \dimen@ \getslant\the\textfont\symletters \ht\usefulbox
    \divide\dimen@ \tw@ 
    \kern\dimen@ 
    \xtilde{\kern-\dimen@ \box\usefulbox\kern\dimen@ }\kern-\dimen@ }}
 {{\setbox\usefulbox=\hbox{$\m@th\scriptstyle #1$}%
    \dimen@ \getslant\the\scriptfont\symletters \ht\usefulbox
    \divide\dimen@ \tw@ 
    \kern\dimen@ 
    \xtilde{\kern-\dimen@ \box\usefulbox\kern\dimen@ }\kern-\dimen@ }}
 {{\setbox\usefulbox=\hbox{$\m@th\scriptscriptstyle #1$}%
    \dimen@ \getslant\the\scriptscriptfont\symletters \ht\usefulbox
    \divide\dimen@ \tw@ 
    \kern\dimen@ 
    \xtilde{\kern-\dimen@ \box\usefulbox\kern\dimen@ }\kern-\dimen@ }}%
 {}}
\newcommand*\xoverline[2][0.75]{%
    \sbox{\myboxA}{$\m@th#2$}%
    \setbox\myboxB\null
    \ht\myboxB=\ht\myboxA%
    \dp\myboxB=\dp\myboxA%
    \wd\myboxB=#1\wd\myboxA
    \sbox\myboxB{$\m@th\overline{\copy\myboxB}$}
    \setlength\mylenA{\the\wd\myboxA}
    \addtolength\mylenA{-\the\wd\myboxB}%
    \ifdim\wd\myboxB<\wd\myboxA%
       \rlap{\hskip 0.5\mylenA\usebox\myboxB}{\usebox\myboxA}%
    \else
        \hskip -0.5\mylenA\rlap{\usebox\myboxA}{\hskip 0.5\mylenA\usebox\myboxB}%
    \fi}
\def\xxoverline #1{\mathchoice
 {{\setbox\usefulbox=\hbox{$\m@th\displaystyle #1$}%
    \dimen@ \getslant\the\textfont\symletters \ht\usefulbox
    \divide\dimen@ \tw@ 
    \kern\dimen@ 
    \overline{\kern-\dimen@ \box\usefulbox\kern\dimen@ }\kern-\dimen@ }}
 {{\setbox\usefulbox=\hbox{$\m@th\textstyle #1$}%
    \dimen@ \getslant\the\textfont\symletters \ht\usefulbox
    \divide\dimen@ \tw@ 
    \kern\dimen@ 
    \xoverline{\kern-\dimen@ \box\usefulbox\kern\dimen@ }\kern-\dimen@ }}
 {{\setbox\usefulbox=\hbox{$\m@th\scriptstyle #1$}%
    \dimen@ \getslant\the\scriptfont\symletters \ht\usefulbox
    \divide\dimen@ \tw@ 
    \kern\dimen@ 
    \xoverline{\kern-\dimen@ \box\usefulbox\kern\dimen@ }\kern-\dimen@ }}
 {{\setbox\usefulbox=\hbox{$\m@th\scriptscriptstyle #1$}%
    \dimen@ \getslant\the\scriptscriptfont\symletters \ht\usefulbox
    \divide\dimen@ \tw@ 
    \kern\dimen@ 
    \xoverline{\kern-\dimen@ \box\usefulbox\kern\dimen@ }\kern-\dimen@ }}%
 {}}
\theoremstyle{definition}
\title{Regenerations and applications}
\author{Giovanni Mongardi}
\address{Giovanni Mongardi\\Alma Mater Studiorum, Università di Bologna,  P.zza di porta san Donato, 5, 40126 Bologna, Italia}
\email{giovanni.mongardi2@unibo.it}
\author{Gianluca Pacienza}
\address{Gianluca Pacienza\\ 
Universit\'e de Lorraine, CNRS, IECL\\
F-54000 Nancy -- France }
\email{gianluca.pacienza@univ-lorraine.fr}
\begin{document}

\begin{abstract}
Chen-Gounelas-Liedtke recently introduced a powerful regeneration technique, a process opposite to specialization,  to prove existence results for rational curves on projective $K3$ surfaces. We show that,  for projective irreducible holomorphic symplectic manifolds,  an analogous regeneration 
principle   holds and provides a very flexible tool to prove existence of uniruled divisors, significantly improving known results. 
\end{abstract}

\subjclass[2020]{14H45, 14J42 (primary).}

\keywords{rational curves, irreducible holomorphic symplectic manifolds.\\
}

\maketitle
\section{Introduction}\label{s:intro}
Rational curves on $K3$ surfaces have now been studied for decades, with motivations also coming from arithmetic geometry, (non-)hyperbolicity questions, and general conjectures on $0$-cycles. A natural generalization of $K3$ surfaces is given by irreducible holomorphic symplectic (IHS) manifolds, which are compact, simply connected K\"ahler manifolds with $H^{2,0}$ generated by a symplectic form. In any even dimension $2n$, $n\geq 2$, there are two known deformation classes (cf. \cite{Bea83}), one is given by Hilbert schemes of points on $K3$ surfaces and their deformations (called varieties of $K3^{[n]}$ type), and the other is given by deformations of an analogous construction using abelian surfaces (called varieties of generalized Kummer type). Two more  deformation classes discovered by O'Grady exist in dimension 6 and 10 (cf. \cite{OG99,OG03}). For the basic theory of IHS manifolds we refer the reader to e.g. \cite{Bea83,Huy99}.\\
In recent years rational curves on projective IHS manifolds have been actively investigated  with different objectives and techniques, cf. e.g. \cite{AV15, BHT15, Vo16, LP15, MP, CMP19, KLM19b, OSY19, Ber20} and the references therein. 
Rational curves covering a divisor on an IHS manifold behave very well with respect to deformation theory, i.e. they deform in their Hodge locus inside the parameter space of  deformations of the IHS manifold and keep covering a divisor. This has been one of the main properties used to prove existence results and, at the same time, one of the main limitations. Indeed, to 
produce a uniruled divisor in an ample linear system of a polarized IHS $(X,H)$ one would try and  exhibit such an example on a special point $(X_0,H_0)$ in the same connected  component of the corresponding moduli space. 
As proved in \cite{OSY19} in some cases it is impossible to do it with primitive rational curves. On the other hand in \cite{CMP19, MP, MPerr} this approach was successfully implemented to prove that outside at most a finite number  of connected components (precisely those not satisfying the necessary conditions given in \cite{OSY19}) of the moduli spaces 
of projective IHS manifolds of 
$K3^{[n]}$ or generalized Kummer type, for all the corresponding points $(X,H)$
there
exists a positive integer $m$ such that the linear system $|mH|$ contains a uniruled divisor covered
by rational curves of primitive homology class Poincaré-dual to that of $|H|$. For a completely different proof (based on Gromov-Witten theory) of the existence of uniruled divisors covered
by primitive rational curves on deformations of $K3^{[n]}$ see \cite[Theorem 0.1]{OSY19}. Due to the cases left out by \cite{CMP19, OSY19}, respectively \cite{MP, MPerr}, one could reasonably wonder whether uniruled divisors on such manifolds do always exist.

More recently Chen-Gounelas-Liedtke introduced in \cite{CGL} a new viewpoint to prove existence results for rational curves on projective $K3$ surfaces: regeneration, a process opposite to specialization. In this article we show that,  for projective irreducible holomorphic symplectic manifolds, an analogous regeneration 
principle   holds for uniruled divisors and provides a new and flexible tool to prove existence results. Combining this new viewpoint with results from \cite{CMP19, MP, MPerr} 
we are able to improve
significantly the available results, in some cases passing from no known existence result at all to density of uniruled divisors in the classical topology. 

To state our results we start with the following. 
\begin{definition}
    Let $\sX\to B$ be a family of IHS manifolds over a connected base. Let $0\in B$ and let $X_0$ be the corresponding fibre. Let $D_0\subset X_0$ be an integral uniruled divisor. A {\it regeneration} $\sD\subset \sX$ of $D_0$ is a flat family of uniruled and generically integral divisors $\sD\to B$ such that $D_0$ is a component of the fiber $\sD_0$ of $\sD$ over $0$.
\end{definition}
A reducible divisor is called uniruled if all of its components are.

\begin{hypothesis}\label{hyp:curve che rigano}
    Let $X$ be a projective IHS manifold. There exists a constant $d\geq 0$ such that  all primitive ample curve classes $[C]\in H_2(X,\Z)$ satisfying $q(C)> d$   have a representative $R\in [C]$ such that $R$ rules a prime divisor of class proportional to $[C]^\vee$.
\end{hypothesis}
Here, $[C]^\vee$ denotes the divisor $[D]\in \NS(X)\otimes\Q$ such that $C\cdot E=q(D,E)$ for all divisors $E$, where $q$ is the Beauville-Bogomolov-Fujiki form  on $X$. A curve is said ample if its dual divisor is ample. Analogously, we define the curve dual to a divisor.

The above hypothesis, which may look slightly unnatural, is the higher dimension analogue of \cite[Theorem A.1]{CGL} and, as we will see below, can be shown to hold  for IHS manifolds of $K3^{[n]}$ and generalized Kummer type, thanks to previous work done in \cite{CMP19,MP,MPerr}. 
Our main novel contribution is the following result which, despite the simplicity of its proof, seems to provide the right viewpoint to tackle these kind of questions.   

\smallskip
\begin{regprin}\label{regprin}
 Let $\sX\to B$ be a family of projective IHS manifolds with a central fibre $\sX_0$ satisfying hypothesis \ref{hyp:curve che rigano}.
 Let $D_0\subset \sX_0$ be an integral uniruled divisor on the central fibre. Then $D_0$ admits a regeneration. 
\end{regprin}

The regeneration principle works perfectly on IHS manifold of $K3^{[n]}$ or generalized Kummer type.

\begin{theorem}\label{thm:anyreg}
    Any integral uniruled divisor in a fiber of any family of projective IHS manifolds of $K3^{[n]}$ or generalized Kummer type admits a regeneration.
\end{theorem}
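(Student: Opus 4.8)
The plan is to derive Theorem~\ref{thm:anyreg} from the Regeneration Principle~\ref{regprin}. Every fibre of a family of projective IHS manifolds of $K3^{[n]}$ or generalized Kummer type is itself an IHS manifold of that deformation type, so it is enough to prove the following.
\begin{claim}
Every projective IHS manifold $X$ of $K3^{[n]}$ type or of generalized Kummer type satisfies Hypothesis~\ref{hyp:curve che rigano}.
\end{claim}

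Granting the Claim: if $\sX\to B$ is such a family and $D_0\subset\sX_0$ is an integral uniruled divisor, then $\sX_0$ satisfies Hypothesis~\ref{hyp:curve che rigano} and Regeneration Principle~\ref{regprin} produces a regeneration of $D_0$. To establish the Claim I would fix $X$ of one of the two types and a primitive ample curve class $[C]\in H_2(X,\Z)$, and let $H\in\NS(X)$ be the primitive integral class on the ray $\Q_{>0}\,[C]^\vee$; it is ample since $[C]^\vee$ is, so $(X,H)$ is a polarized IHS manifold of the same deformation type, and $q(H)$ is large whenever $q(C)$ is. I would then invoke the existence theorems for uniruled divisors covered by rational curves from \cite{CMP19} (for $K3^{[n]}$ type) and \cite{MP,MPerr} (for generalized Kummer type): once $q(H)$ exceeds an explicit lattice-theoretic bound these produce an integer $m>0$ and a uniruled divisor in $|mH|$ whose prime components are ruled by rational curves of a primitive class proportional to $H^\vee$, hence --- as $[C]$ is primitive and $H\propto[C]^\vee$ --- of class $[C]$. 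A prime component $D'$ that is moreover proportional to $[C]^\vee$ (which those constructions provide, by exhibiting the divisor in $|mH|$ as a union of prime uniruled divisors each proportional to $H$), together with a ruling member $R\in[C]$, is exactly the datum required by Hypothesis~\ref{hyp:curve che rigano}.

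Two points need attention. The first is uniformity: Hypothesis~\ref{hyp:curve che rigano} asks for a single threshold $d=d(X)$ valid for all primitive ample curve classes simultaneously, while the cited results are most naturally applied one polarization at a time. Since the lattice $\NS(X)$ with its Hodge structure is fixed, the bounds entering those constructions depend only on this datum and stay uniform in $[C]$; concretely one transports the construction from a convenient polarized pair to $(X,[C]^\vee)$ along the Hodge locus of $[C]^\vee$ in the period domain, using --- as recalled in the introduction --- that a uniruled divisor covered by rational curves deforms within its Hodge locus. The second is the finite list of exceptional polarized deformation types of \cite{OSY19} (and of the Kummer analysis of \cite{MP,MPerr}), for which the necessary conditions of \cite{OSY19} for a uniruled divisor covered by rational curves of primitive class dual to the polarization fail; these are absorbed by the slack in Hypothesis~\ref{hyp:curve che rigano}, where the divisor need only be \emph{proportional} to $[C]^\vee$ and the multiple $m$ is free, and in the $K3^{[n]}$ case one may alternatively appeal to the Gromov--Witten-theoretic existence statement \cite[Theorem~0.1]{OSY19}.

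I expect the main obstacle to be exactly this circle of issues --- extracting from the existing constructions the uniform, ``all ample curve classes on the fixed $X$ at once'' version of Hypothesis~\ref{hyp:curve che rigano}, and checking that the lattice-theoretically exceptional cases genuinely fall under the flexibility built into the hypothesis. Once that is done, the rest is a reinterpretation of known existence results together with a single application of Regeneration Principle~\ref{regprin}.
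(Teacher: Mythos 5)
Your reduction is exactly the paper's: Theorem~\ref{thm:anyreg} is deduced by combining the Regeneration Principle~\ref{regprin} with your Claim, which is precisely the paper's Proposition~\ref{prop:hyp su K3} (with explicit constants $(2n-2)^2(n-1)$, resp.\ $(2n+2)^2(n+1)$). One correction to your sketch of the Claim: the exceptional components of \cite{OSY19} are \emph{not} ``absorbed by the slack'' in Hypothesis~\ref{hyp:curve che rigano} --- the hypothesis still demands a ruling curve of the primitive class $[C]$ itself, which is exactly what fails on those components --- rather, they are excluded by the threshold $q(C)>d$. That is how the paper argues: by \cite[Corollary 2.8]{CMP19} and \cite[Theorem 4.2]{MP} the pair $(X,C)$ (a curve class, not a polarization) is deformation equivalent to $(S^{[n]},h_S-2gr_n)$ or a variant (resp.\ the Kummer analogue), the lower bound on $q(C)$ forces the genus $p$ of $(S,h_S)$ to be at least $g$, and in that range the explicit constructions of \cite[Proposition 4.1]{CMP19} and \cite[Proposition 1.1]{MPerr} yield a rational representative ruling a divisor, which is then carried back along the Hodge locus via \cite[Proposition 3.1]{CMP19}. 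So the uniformity in $[C]$ that you correctly flag as the main obstacle is obtained from the monodromy classification of pairs $(X,C)$ and the single threshold $d$, not by running the polarized existence theorems one ample class at a time on the fixed $X$.
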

Our first application is to show existence of ample uniruled divisors also for the connected components of the moduli spaces left out by \cite{CMP19, MP, MPerr}.

\begin{theorem}\label{thm:to zal}
    Let $(X,H)$ be a polarized IHS manifold of $K3^{[n]}$ or generalized Kummer type, then there exists $m\in\N$ and a uniruled divisor in $|mH|$.
\end{theorem}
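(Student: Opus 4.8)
The plan is to combine the regeneration principle (Regeneration Principle \ref{regprin}), or rather its unconditional consequence Theorem \ref{thm:anyreg}, with the existing existence results of \cite{CMP19, MP, MPerr} and a degeneration-to-a-Hilbert-scheme argument. First I would recall that for the connected components of the moduli space of polarized IHS manifolds of $K3^{[n]}$ or generalized Kummer type that \emph{do} satisfy the numerical conditions of \cite{OSY19}, the cited works already produce, for every polarized $(X,H)$ in such a component, an integer $m$ and a uniruled divisor in $|mH|$; so nothing is to be done there. The content is to handle the finitely many exceptional components. For those, the strategy is to pass to a \emph{special} member $(X_0,H_0)$ of the component — e.g.\ a Hilbert scheme $S^{[n]}$ on a projective $K3$ surface $S$ with a suitable polarization, or a generalized Kummer variety built from an abelian surface — on which one can exhibit an explicit uniruled divisor (not necessarily in a primitive class: this is precisely the point of \cite{OSY19}, that primitivity may fail, but some multiple $|m_0 H_0|$ still contains one, obtained for instance from the uniruled divisors swept out by the rational curves on $S$ via the Hilbert–Chow or tautological constructions, or from boundary-type divisors).

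Next, given such a $(X_0, H_0)$ with an integral uniruled divisor $D_0 \subset X_0$ whose class is a multiple of $H_0$, I would apply Theorem \ref{thm:anyreg} to the universal family over (a connected neighbourhood in) the moduli space: $D_0$ admits a regeneration $\sD \to B$, a flat family of uniruled, generically integral divisors. The class $[\sD_b]$ is locally constant on $B$ (flatness plus the fact that the polarization deforms), so on a general fibre $X = X_b$ the divisor $\sD_b$ is uniruled, integral (generically integral plus irreducibility of the class, or one passes to an integral component which remains uniruled by the convention just below the definition), and lies in $|mH|$ for the same $m$. Since the base $B$ of the moduli space is connected and the period map is a local isomorphism (local Torelli), one propagates from the special point to an arbitrary point of the component, yielding the statement for every $(X,H)$ in that component. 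Finally, one observes that there are only finitely many deformation types to consider (two families of Hilbert-scheme type constructions in each dimension, plus the Kummer ones), and within each, only finitely many numerically exceptional components, so the argument applies uniformly.

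The main obstacle is the first step: producing the explicit uniruled divisor in some $|m_0 H_0|$ on a well-chosen special fibre of each exceptional component. For $K3^{[n]}$ type one expects to use the surface $S^{[n]}$ with a polarization of the form $2H_S^{[n]} - \delta$ or similar, and to take the divisor swept out by the $n$-th symmetric products of a fixed ruling of a uniruled divisor on $S$ (or of the rational curves guaranteed on $S$ by \cite{CGL}), checking uniruledness and computing the Beauville–Bogomolov class of the resulting divisor; for generalized Kummer type one argues analogously with an abelian surface, using the fibres of the Albanese-type map. One must verify that the class obtained is genuinely a multiple of an \emph{ample} class $H_0$ and that the exceptional components are actually reached by these constructions — this is where the classification of \cite{OSY19} and the lattice-theoretic bookkeeping of \cite{CMP19, MP, MPerr} enter, and it is the only place where real case analysis is required. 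Once one such example is in hand in each component, Theorem \ref{thm:anyreg} does the rest essentially for free.
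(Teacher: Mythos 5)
Your plan has a genuine gap at the step you yourself flag as ``the main obstacle'': producing, on a special fibre $(X_0,H_0)$ of each exceptional component, an explicit uniruled divisor whose class is a multiple of $H_0$. That is precisely the problem that \cite{CMP19, MP, MPerr} could not solve for those components (and that \cite{OSY19} shows is obstructed for primitive curve classes), so leaving it as a ``to be checked'' lattice-theoretic verification means the proof is not complete --- and there is no reason to expect the symmetric-product constructions you sketch to land in a multiple of $H_0$ on exactly those components. The missing idea is that the Regeneration Principle makes this requirement unnecessary: its proof attaches to a minimal ruling curve $C_0$ of an \emph{arbitrary} integral uniruled divisor $D_0$ a rational curve of class $mH_0^\vee - C_0$ ruling an ample divisor, so the regenerated family is ruled by connected curves of class $mH_0^\vee$ and hence lies in a class proportional to the polarization \emph{regardless of the class of $D_0$}. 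The paper therefore takes $D_0 = D_C = \{Z \in S^{[n]} : \supp(Z)\cap C \neq \emptyset\}$ for $C$ any Bogomolov--Mumford rational curve on a very general $K3$ surface $S$ whose Hilbert scheme sits in the given component $\sM$ (by \cite[Theorem 2.5]{CMP19}, resp.\ the Kummer analogues), regenerates over all of $\sM$, and identifies the class as $mH$ by evaluating the locally constant class of the flat family at the very general, Picard-rank-one point of $\sM$. Note also a secondary inaccuracy in your write-up: $D_0$ is only a \emph{component} of the central fibre $\sD_0$ of a regeneration, so even if $[D_0]=m_0H_0$ the class of $\sD_b$ need not be $m_0H_0$; the Picard-rank-one argument at the generic point is what pins down the class, not flatness alone. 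With these corrections your argument collapses to the paper's: no case division between ``good'' and ``exceptional'' components is needed.
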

In particular the applications to zero-cycles pointed out in \cite[Theorems 1.7 and 1.8]{CMP19} now hold for all polarized IHS manifolds of $K3^{[n]}$ or generalized Kummer type. 

At the very general point in the $K3^{[n]}$-case we can drastically improve Theorem \ref{thm:to zal}. 
\begin{theorem}\label{thm:density}
    Let $\sM$ be an irreducible component of the moduli space of polarized IHS manifolds of $K3^{[n]}$-type. Then any polarized IHS manifold  $X$ outside a possibly countable union of subvarieties of $\sM$ verifies the following: any pair of points $x_1,x_2\in X$ can be arbitrarily approximated by a chain of at most $2n$ irreducible rational curves, each of which deforms in a family covering a divisor.
\end{theorem}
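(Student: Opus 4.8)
\emph{Plan.} We would first reduce to the case $\NS(X)=\Z H$ ($H$ the polarisation) with $X$ outside a further countable union of proper subvarieties of $\sM$; together with the Noether--Lefschetz divisors these make up the countable union in the statement, and we call such an $X$ very general (note $\dim_\C X=2n$). The proof then splits in two: (i) produce on a very general $X$ infinitely many integral uniruled divisors $D_j\subset X$, each covered by a family of rational curves, with $\bigcup_jD_j$ dense for the classical topology; (ii) show that the ruling curves of the $D_j$ join any two points of $X$, up to arbitrarily small error, by a chain of length $\le 2n$.

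For (i) we would argue as follows. Since $\sM$ is irreducible with non-empty Noether--Lefschetz loci, $X$ is the very general fibre of a one-parameter family $\sX\to B$ whose central fibre $X_0$ has $\rho(X_0)\ge 2$ and, exploiting the flexibility in the theory of moduli of sheaves on $K3$ surfaces used in \cite{CMP19,MP,MPerr}, may be taken birational to a moduli space of sheaves on a projective $K3$ surface $S$, in the simplest cases to a Hilbert scheme $S^{[n]}$. On $X_0$ Hypothesis \ref{hyp:curve che rigano} holds --- this is exactly what is recalled after its statement --- and, $\NS(X_0)$ having rank $\ge 2$, there are infinitely many primitive ample curve classes $\beta$ with $q(\beta)>d$, each giving an integral uniruled divisor $D_0^{(\beta)}$ ruled by a class-$\beta$ curve; among them, attached to each rational curve $R\subset S$, is an incidence divisor $D_R$ (in the Hilbert-scheme case $D_R=\{\xi:\supp\xi\cap R\ne\emptyset\}$), ruled by the curves moving one point of the support along $R$, and $\bigcup_RD_R$ is dense in $X_0$ because rational curves on $S$ are dense for the classical topology (the case of $K3$ surfaces, underlying \cite{CGL}). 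We would then apply Theorem \ref{thm:anyreg} to each such divisor: its regeneration $\sD\to B$ has very general fibre an integral uniruled divisor on $X$ --- of class $mH$ since $\NS(X)=\Z H$ --- covered by the flat limits of the ruling curves, giving infinitely many $D_j\subset X$. Taking the regeneration parameter very general but arbitrarily close to $0\in B$ should keep the union of the regenerated $D_R$ dense in $X$. \emph{The hard part} will be this last point: establishing density \emph{after} regeneration, uniformly across the infinitely many families $D_R$; this is where the flatness, generic integrality and properness built into the notion of regeneration must be used.

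For (ii) the mechanism is transparent on $X_0=S^{[n]}$ and we would transport it to $X$ through the regeneration. Approximate $x_1,x_2$ by configurations $\{p_1,\dots,p_n\}$ and $\{q_1,\dots,q_n\}$ each of whose points lies on a rational curve of $S$ (admissible, such configurations being dense). Then bring $p_i$ onto $q_i$ one index at a time: if $p_i\in R$ and $q_i\in R'$, then --- any two rational curves on $S$ meeting --- slide the $i$-th point of the configuration first along $R$ to the point $R\cap R'$ and then along $R'$ to $q_i$; these are two ruling curves of $D_R$ and $D_{R'}$, consecutive curves of the chain meet in an honest configuration, and the transition from index $i$ to index $i+1$ is legitimate precisely because the $(i+1)$-st point already lies on a rational curve. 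This yields a chain of exactly $2n$ ruling curves whose first member passes through $\{p_1,\dots,p_n\}\approx x_1$ and whose last through $\{q_1,\dots,q_n\}\approx x_2$ --- two curves per coordinate, $n$ coordinates, $2n=\dim_\C X$. Regenerating each $D_R$ (Theorem \ref{thm:anyreg}) carries the configuration, and with it the chain, to the very general $X$; as in (i) the single delicate point is the uniform control of the regenerated curves, the combinatorics being otherwise forced.
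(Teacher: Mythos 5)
Your part (ii) reproduces, essentially verbatim, the combinatorial mechanism of the paper's Corollary~\ref{cor:ratchain}: approximate the two points of $X$ by reduced length-$n$ subschemes, move one support point at a time along a pair of meeting irreducible rational curves on $S$ (density of such curves being Chen--Lewis, Theorem~\ref{thm:CL13}), for a total of $2n$ irreducible ruling curves of incidence divisors. That part is fine. The problem is the transfer to the very general $X$, and you have correctly located it yourself: ``the single delicate point is the uniform control of the regenerated curves.'' This is not a delicate point to be smoothed over --- it is the whole content of the deduction, and your setup cannot deliver it. If you fix one special fibre $X_0\cong S^{[n]}$ (or a moduli space of sheaves) in a one-parameter family and regenerate the incidence divisors $D_R$ out to a distant very general fibre $X$, the Regeneration principle~\ref{regprin} only produces a flat family of uniruled divisors over the base; it gives no information whatsoever about \emph{where} in $X$ the regenerated divisors or their ruling curves sit, so neither the density claimed in your part (i) nor the approximation of the given points $x_1,x_2$ follows. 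Flatness and properness of $\sD\to B$ control the limit at $0$, not the position of fibres far from $0$.

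The paper closes exactly this gap with an input you do not invoke: by \cite[Corollary 1.2]{MPdens}, the locus in $\sM$ parametrizing Hilbert schemes $S^{[n]}$ of very general projective $K3$ surfaces is dense in $\sM$ for the \emph{classical} topology. Hence one may choose $(S,H)$ so that $S^{[n]}$ is arbitrarily close to $X$ in the universal family, with $\xi_1,\xi_2\in S^{[n]}$ approximating $x_1,x_2$; the chain of Corollary~\ref{cor:ratchain} is then built on a fibre that is already an arbitrarily small perturbation of $X$, and regenerating its $2n$ ruled divisors over this short distance moves the chain only slightly, by continuity of the flat family near the central fibre. In other words, the quantifiers are reversed relative to your plan: rather than one specialization followed by a long regeneration whose image must be shown dense (uniformly over infinitely many families $D_R$ --- something that does not follow from the definitions), one uses, for each $\varepsilon$, a different special fibre at distance less than $\varepsilon$ and a single short regeneration. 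Your part (i) is then unnecessary, and indeed the statement does not require infinitely many dense uniruled divisors on $X$ at all. As written, your argument has a genuine gap at the step you flagged, and the missing ingredient is the strong-topology density of the Hilbert-scheme locus in $\sM$.
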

The above result can be seen as an effective non-hyperbolicity statement. The study of non-hyperbolicity of IHS manifolds dates back to Campana \cite{Cam92}, with more recent important contributions by Verbitsky \cite{Ver15} and Kamenova-Lu-Verbitsky \cite{KLV14}. We refer the interested reader to \cite{KL22} for a thorough discussion and a complete list of references. 

We can also show the following less strong but more precise result, which was previously known only in dimension 2 by \cite[Theorem 4.10]{BT00}.
\begin{theorem}\label{thm:bir}
    Let $X$ be a projective IHS manifold of $K3^{[n]}$ or Kummer type such that $Bir(X)$ is infinite. Then $X$ has infinitely many uniruled divisors.
\end{theorem}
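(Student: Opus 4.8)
The plan is to combine Theorem~\ref{thm:to zal} with the action of $\Bir(X)$ on the set of prime uniruled divisors. Fix a polarization $H$ on $X$. By Theorem~\ref{thm:to zal} there is an $m\in\N$ and an effective divisor $\Delta\in|mH|$ all of whose components are uniruled; writing $\Delta=\sum_j n_j D_j$ with the $D_j$ distinct and prime, each $D_j$ is then a prime uniruled divisor and $\sum_j n_j[D_j]=m[H]$ in $\NS(X)$. Let $\mathcal U$ be the set of prime uniruled divisors of $X$. I want to show $\mathcal U$ is infinite, and the strategy is to exhibit a finite-index subgroup of $\Bir(X)$ that is forced into the stabilizer of the ample class $[H]$, which is finite.

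First I would observe that $\Bir(X)$ acts on $\mathcal U$. Every $g\in\Bir(X)$ is an isomorphism in codimension one, so for a prime divisor $P$ the strict transform $g_*P$ is again prime, the assignment $P\mapsto g_*P$ is a bijection of the set of prime divisors with inverse $(g^{-1})_*$ and is compatible with composition, and $[g_*P]=g_*[P]$ in $\NS(X)$, where $g_*$ on $\NS(X)$ denotes the isometry induced by $g$. It also preserves uniruledness: a general rational curve in the covering family of $P$ meets the open locus on which $g$ is an isomorphism, so its image is a rational curve through the general point of $g_*P$. Hence $\Bir(X)$ acts on $\mathcal U$.

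Now assume for contradiction that $\mathcal U$ is finite. The action then yields a homomorphism $\Bir(X)\to\operatorname{Sym}(\mathcal U)$ with finite image, so its kernel $K$ has finite index in $\Bir(X)$ and is therefore infinite. Each $g\in K$ fixes every component $D_j$ as a subvariety, hence $g_*[D_j]=[D_j]$ for all $j$, and so $g_*(m[H])=\sum_j n_j\, g_*[D_j]=m[H]$; since $\NS(X)$ is torsion free this gives $g_*[H]=[H]$. Thus $K$ is contained in $\{g\in\Bir(X):g_*[H]=[H]\}$. It remains to see that this stabilizer is finite. It is classical for IHS manifolds that a birational self-map carrying a K\"ahler (in particular an ample) class to a K\"ahler class extends to a biregular automorphism, so the stabilizer lies in $\Aut(X)$; moreover $\Pic^0(X)=0$, so an automorphism fixing $[H]\in\NS(X)=\Pic(X)$ fixes the ample line bundle $\mathcal O_X(H)$, and $\Aut(X,\mathcal O_X(H))$ is finite because $\Aut^0(X)$ is trivial (one has $T_X\cong\Omega^1_X$ and $H^0(X,\Omega^1_X)=0$) while $\Aut(X,\mathcal O_X(H))$ is of finite type. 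This contradicts the infinitude of $K$, so $\mathcal U$ is infinite.

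The only genuinely delicate point is the finiteness of the stabilizer of $[H]$ in $\Bir(X)$; everything else (the action on $\mathcal U$, the fact that strict transforms preserve uniruledness, torsion-freeness of $\NS(X)$) is routine. As an alternative to the argument just given, one could invoke the Morrison--Kawamata cone conjecture for IHS manifolds: $\Bir(X)$ acts on the movable cone with a rational polyhedral fundamental domain, hence has finite stabilizers at classes in the interior of the cone, and the ample class $[H]$ is such a class.
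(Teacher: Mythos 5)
Your proof is correct, but it takes a genuinely different route from the paper's. The paper first invokes \cite[Theorem 1.1]{Og06} to extract an element $g\in\Bir(X)$ of infinite order, takes a single ample uniruled divisor $D$ from Theorem~\ref{thm:to zal}, and shows directly that the $g$-orbit of $D$ is infinite: otherwise a power of $g$ would fix $[D]$ and hence act on the negative definite lattice $D^\perp\subset\NS(X)$, whose isometry group is finite, forcing $g$ to have finite order on $\NS(X)$. You instead run a contradiction through the permutation action of all of $\Bir(X)$ on the set of prime uniruled divisors: if that set were finite, the kernel of the permutation representation would be an infinite finite-index subgroup fixing the class $m[H]$ of the uniruled member of $|mH|$, hence contained in the stabilizer of the ample class $[H]$, which is finite by the extension-to-automorphisms argument plus Fujiki (exactly the ingredients the paper uses in Lemma~\ref{lem:folk}). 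Your approach buys independence from Oguiso's theorem --- which matters, since an infinite group need not contain an element of infinite order, so the paper genuinely needs that input --- and it reuses machinery already present in the paper. What it gives up is the extra information the paper's argument yields for free: an explicit infinite family of uniruled divisors forming a single $\Bir(X)$-orbit of (transforms of) one ample uniruled divisor, rather than a pure existence statement by contradiction. Your closing alternative via the cone conjecture for the finiteness of the stabilizer is also fine, but the Fujiki route is the lighter and more self-contained one here.
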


We hope that this new viewpoint via regenerations could also lead to progress towards the existence of higher codimension algebraically coisotropic subvarieties.

{\bf Acknowledgements.} We thank Claire Voisin for suggesting to apply the regeneration principle to non-hyperbolicity questions and G. Ancona, Ch. Lehn and K. O'Grady for useful comments on a preliminary version. 
G.M. was supported by PRIN2020 research grant ”2020KKWT53”, by PRIN2022 research grant "2022PEKYBJ" and is a member
of the INDAM-GNSAGA. G.P. was supported by the CNRS International Emerging Actions (IEA)
project “Birational and arithmetic aspects of orbifolds”.

\section{Regenerations}\label{s:reg}

\begin{proof}[Proof of the Regeneration principle \ref{regprin}]
We can suppose that $\sX_0$ has Picard rank at least two and that $D_0$ is not proportional to the polarization, otherwise by \cite[Corollary 3.5]{CMP19}, we can deform a curve ruling $D_0$ over all of $B$, and obtain in this way a regeneration of $D_0$. 

Let $C_0$ be the class of a minimal curve ruling $D_0$. Let $\sH\in \Pic(\sX)$ be a relative polarization and $H_0$ its restriction to the central fibre $\sX_0$. Let $H_0^\vee$ be the (ample) class of a curve dual to $H_0$. We can choose $m\in\N$ big enough so that $mH_0^\vee-C_0$ is ample, primitive and of square bigger than $d_0$.
Therefore, by Hypothesis \ref{hyp:curve che rigano}, we have a rational curve $R_0\in [mH_0^\vee-C_0]$ which rules an ample divisor $F_0$ inside $\sX_0$.

As the divisor $F_0$ is ample 
 we have $C_0\cdot F_0>0$. Hence, we can fix a point in $C_0\cap F_0$ and pick a curve $R_0$ in the ruling of $F_0$ passing through this point. Notice that $C_0$ cannot coincide with the ruling of $F_0$, as $C_0$ and $R_0$ are not proportional (because the divisors they rule are not). In this way we obtain a connected rational curve of class $[C_0+R_0]$. By abuse of notation, we denote this curve by $C_0+R_0$. By \cite[Corollary 6.3]{Ber20}, which generalizes \cite[Corollary 3.5]{CMP19} to the reducible case, the curve $C_0+R_0$ deforms in its Hodge locus $\rm{Hdg_{[C_0+R_0]}}$ of the class $[C_0+R_0]=[mH_0^\vee]$ and keeps ruling a divisor on each point of $\rm{Hdg_{[C_0+R_0]}}$. By construction, this Hodge locus  coincides with $B$, as $C_0+R_0$ is a multiple of $H_0^\vee$, and the result follows.
\end{proof}

The following can be seen as a concentration of some of the main contributions of \cite{CMP19, MP, MPerr}, namely the study of the monodromy orbits, constructions of examples and deformation theory.  

\begin{proposition}\label{prop:hyp su K3}
    Hypothesis \ref{hyp:curve che rigano} holds for any family of manifolds of $K3^{[n]} $ and Kummer type, and the constant $d_0$ is $ (2n-2)^2(n-1)$ and $(2n+2)^2(n+1)$ respectively.
\end{proposition}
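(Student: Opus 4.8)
The plan is to see that Hypothesis \ref{hyp:curve che rigano} is essentially a reformulation, in the language of curve classes, of the existence results for uniruled divisors of \cite{CMP19, MP, MPerr}, and that the threshold $d_0$ is governed by the arithmetic of the Beauville--Bogomolov--Fujiki (BBF) lattice. \textbf{Step 1 (reduction to polarizations).} Write $\Lambda=H^2(X,\Z)$ with the BBF form, so that $H_2(X,\Z)=\Lambda^\vee$ and $[C]^\vee$ is the image of $[C]$ under the natural embedding $\Lambda^\vee\hookrightarrow\Lambda_\Q$. Since $\Lambda^\vee/\Lambda$ is finite, the ray $\R_{>0}[C]^\vee$ meets $\NS(X)$; let $H$ be its primitive generator, an ample primitive divisor class. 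Then $[C]$ is exactly the primitive curve class dual to $H$ (the primitivization in $\Lambda^\vee$ of $q(H,-)$), and if $t$ denotes the divisibility of $H$ in $\Lambda$ one has $q([C])=H^2/t^2$. Hence it suffices to prove: for every polarized IHS manifold $(X,H)$ of $K3^{[n]}$ (resp. generalized Kummer) type with $H^2/t^2>d_0$ there exist $m\in\N$ and a \emph{prime} uniruled divisor in $|mH|$ swept out by rational curves of primitive class $[C]$, the one dual to $|H|$. Note that $q([C])=H^2/t^2$ depends only on the deformation type of $(X,H)$, i.e. on $(H^2,t)$, being invariant under parallel transport.

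\textbf{Step 2 (the threshold is lattice-theoretic).} By \cite{OSY19} there is an explicit finite list of deformation types $(H^2,t)$ carrying an obstruction to primitive uniruled divisors, and by \cite{CMP19, MP, MPerr} the statement of Step 1 holds whenever $(H^2,t)$ is not on that list; this rests on the description of the relevant monodromy orbits (for $K3^{[n]}$ type, Eichler's criterion reduces the orbit of a primitive class to its square and divisibility, and there is an analogue for the Kummer monodromy group). The point to check is that every obstructed type satisfies $q([C])=H^2/t^2\le d_0$, with $d_0=(2n-2)^2(n-1)$ for $K3^{[n]}$ type and $d_0=(2n+2)^2(n+1)$ for Kummer type: this is a finite computation, using that $t$ divides the order of the discriminant group ($2n-2$, resp. $2n+2$), that the discriminant form constrains $H^2$ modulo $2t^2$, and that the obstruction of \cite{OSY19} keeps $H^2$ small in terms of $n$ and $t$. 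Granting it, $q([C])>d_0$ forces $(X,H)$ to have a good deformation type.

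\textbf{Step 3 (producing the prime divisor).} For a good type I would pass to the very general polarized member $(Y_0,H_0)$, where $\rho(Y_0)=1$. By \cite{CMP19, MP, MPerr}, some $|mH_0|$ contains a uniruled divisor covered by rational curves of primitive class $H_0^\vee$; picking an irreducible component of the parameter space of such curves whose members sweep out a divisor, and taking the image of the corresponding universal curve, produces an \emph{irreducible}, hence prime, uniruled divisor $\Gamma_0$, of class proportional to $H_0$ (as $\rho(Y_0)=1$), still covered by curves of class $H_0^\vee$. By \cite[Cor.~3.5]{CMP19}, in the reducible form \cite[Cor.~6.3]{Ber20}, a ruling curve deforms over the Hodge locus of its class and keeps ruling a divisor on every point of it; since $H_0^\vee$ is dual to the polarization, that Hodge locus is the entire connected component of the moduli space. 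Transporting back to the given $X$ yields an irreducible family of rational curves of class $[C]$ sweeping out a divisor $\Delta$; being the image of the irreducible universal curve, $\Delta$ is prime, and its class is the specialization of $[\Gamma_0]$, hence proportional to $H$, i.e. to $[C]^\vee$. This is exactly the representative required by Hypothesis \ref{hyp:curve che rigano}.

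\textbf{The main obstacle.} The two delicate parts are the bookkeeping of Step 2 — verifying that the finitely many obstructed types of \cite{OSY19} all lie below the clean bound $d_0$, and, in the Kummer case, handling the correct monodromy group and its action on the full cohomology — and the need, in Step 3, to obtain a genuinely \emph{prime} divisor at the possibly very special point $X$: this is precisely why one must deform the ruling \emph{curve} rather than the divisor and invoke the irreducibility of the relevant component of the Hilbert (or Chow) scheme.
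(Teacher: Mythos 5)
Your Steps 1 and 3 track what the paper actually does (reduce to a primitive ample curve class, build the ruling rational curve on an explicit model, then deform it over the Hodge locus of its class, which is the whole base because the class is a multiple of the dual of a polarization). The genuine gap is Step 2, which is where the entire quantitative content of the proposition sits and which you explicitly leave open ("this is a finite computation\dots Granting it"). The paper does not obtain the threshold by enumerating the obstructed types of \cite{OSY19} and bounding their squares. Instead, it uses the monodromy-orbit classification of primitive ample curve classes (\cite[Corollary 2.8]{CMP19}, \cite[Theorem 4.2]{MP}) to deform $(X,C)$ to an explicit model $(S^{[n]},h_S-2gr_n)$ or $(S^{[n]},h_S-(2g-1)r_n)$ with $2g\le n-1$, resp.\ $2g\le n$ (and the Kummer analogues), where $(S,h_S)$ is a polarized $K3$ of genus $p$. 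The explicit constructions of ruling rational curves in these models (\cite[Section 4.1 and Proposition 4.1]{CMP19}, \cite[Proposition 1.1]{MPerr}) are only available when $p\ge g$, and the lower bound on $q(C)$ is precisely what forces this: if $p\le g$ then
\[
q(C)\;=\;2(p-1)-\frac{4g^2}{2(n-1)}\;\le\;2(g-1)-\frac{4g^2}{2(n-1)}\;\le\;n-2,
\]
contradicting the assumed lower bound on $q(C)$.

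So the constant $d_0$ is not extracted from the finite list of \cite{OSY19}; it is the numerical condition guaranteeing that the genus parameters of the deformation model lie in the range where the known constructions apply. Your proposed substitute — checking that every \cite{OSY19}-obstructed type has $q([C])\le d_0$ — is a different and unperformed computation, and it would not by itself suffice: \cite{OSY19} provides \emph{necessary} conditions (non-existence results for primitive ruling curves), whereas Hypothesis~\ref{hyp:curve che rigano} needs a positive existence statement for \emph{every} class above the threshold, which only the constructions of \cite{CMP19,MP,MPerr} give, and only under $p\ge g$. Until the inequality $p\ge g$ (or an equivalent) is actually derived from $q(C)>d_0$, the proposition with its specific constant is not proved.
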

\begin{proof}
Let $(S,h_S)$ be a polarized K3 of genus $p$ and $(A,h_A)$  a polarized abelian surface of type $(1,p-1)$. 
We denote by $r_n$  the class of an exceptional rational curve which is the general fiber of the Hilbert-Chow morphism $S^{[n]}\to S^{(n)}$ (resp. $K_n(A)\subset A^{[n+1]}\to A^{(n+1)}$) and by $h_S\in H_2(S^{[n]}, \mathbb Z)$ (resp. $h_A\in H_2(K_n(A), \mathbb Z)$ ) the image of the class $h_S\in H_2(S, \mathbb Z)$ (resp. $h_A\in H_2(A, \mathbb Z)$) under the inclusion $H_2(S, \mathbb Z)\hookrightarrow H_2(S^{[n]}, \mathbb Z)$ (resp. $H_2(A, \mathbb Z)\hookrightarrow H_2(K_n(A), \mathbb Z)$). Recall that $q(h_S)=2p-2=q(h_A)$ and $q(r_n)$ equals $1/(2n-2)$ in the $K3^{[n]}$ case and $1/(2n+2)$ in the Kummer case.
 
 We take a primitive ample curve class $C\in H_2(X,\Z)$ such that $q(C)>n-1$ (resp. $n+1$ for Kummer type).
By \cite[Corollary 2.8]{CMP19} and \cite[Theorem 4.2]{MP}, the pair $(X,C)$ is deformation equivalent to the pair $(S^{[n]},h_S-2gr_n)$ with $2g\leq n-1$ or $(S^{[n]},h_S-(2g-1)r_n)$ with $2g\leq n$  (resp. $(K_n(A),h_A-2gr_n)$ or $(K_n(A),h_a-(2g-1)r_n)$ with $2g\leq n-1$ ). 

If $p\leq g$, we would get a contradiction since
$$
n-1 \leq q(C)=
q(h_S) -4g^2 \frac{1}{2(n-1)} =  2(p-1) -4g^2 \frac{1}{2(n-1)}\leq 2(g-1)-4g^2 \frac{1}{2(n-1)}\leq n -2
$$ 
 Therefore, $p\geq g$ and by \cite[Section 4.1]{CMP19} and \cite[Proof of Proposition 2.1]{MPerr}, the curves we obtain in $S^{[n]}$ (resp. in $K_n(A)$) have a rational representative which covers a divisor by \cite[Proposition 4.1]{CMP19} and \cite[Proposition 1.1]{MPerr}.  Such divisor then deforms in its Hodge locus by \cite[Proposition 3.1]{CMP19}, and the proposition follows.
\end{proof}

\begin{proof}[Proof of Theorem 
    \ref{thm:anyreg}]
    The result follows immediately from the combination of Proposition \ref{prop:hyp su K3} and the Regeneration principle \ref{regprin}.
\end{proof}

\section{Applications}\label{s:app}
In this section we provide the proofs of the applications of the Regeneration principle 
to IHS manifolds of $K3^{[n]}$-type or generalized Kummer type. 

\begin{proof}[Proof of Theorem \ref{thm:to zal}]
Again the result follows from the combination of Proposition \ref{prop:hyp su K3} and the Regeneration principle \ref{regprin}.  Indeed, suppose that $(X,H)$ is a polarized IHS manifold of $K3^{[n]}$-type and let us consider a connected component $\sM$ of the moduli space of polarized IHS manifolds containing $(X,H)$. By \cite[Theorem 2.5]{CMP19}, 
 there exists a point in $\sM$ which parametrizes the Hilbert scheme over a very general projective $K3$ $(S,H_S)$. Let us choose any rational curve $C$ in $S$, whose existence is guaranteed by Bogomolov-Mumford \cite{MM82}, see also \cite[Section VIII.23]{BHPV04}, and let us consider the uniruled divisor $D_C=\{Z\in S^{[n]}$ such that $supp(Z)\cap C\neq \emptyset\}$. We then apply the Regeneration principle \ref{regprin} to $D_C$, and obtain a regeneration of it on all IHS manifolds corresponding to points of $\sM$. As the very general element of $\sM$ has Picard rank one, the class of this regeneration is proportional to this unique class, hence our regeneration has class $mH$ on $X$, for some $m$.    For the generalized Kummer type we proceed the same way, by using  \cite[Theorem 4.2]{MP}  and \cite[Theorem 1.1]{KLM19} instead of the analogous results in the $K3^{[n]}$-type case. 
\end{proof}

More generally, we have the following result.

\begin{proposition}\label{prop:square zero}
Let $(X,H)$ be a projective IHS manifold of $K3^{[n]}$ or Kummer type, and let $D\in \Pic(X)$ be a divisor with $q(D)\geq 0$ and $(D,H)>0$. Then there exists a uniruled divisor in $|mD|$ for some $m\in \N$.
\end{proposition}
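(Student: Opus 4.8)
The plan is to reduce to the situation covered by Hypothesis~\ref{hyp:curve che rigano} — a sufficiently positive curve class ruling a prime divisor — and then to deform the resulting uniruled divisor back to $X$ by means of the deformation theory underlying the Regeneration principle.

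First I would dispose of the harmless reductions. If $D$ is proportional to $H$ then $q(D)>0$ and the statement is Theorem~\ref{thm:to zal} up to rescaling; so assume $D$ is not proportional to $H$ and that $X$ has Picard rank at least two. We only need a uniruled divisor in $|mD|$ for \emph{some} $m$, and $q(kD)=k^2q(D)$, so we may replace $D$ by any positive multiple of it; hence, when $q(D)>0$, we may assume $q(D)$ — equivalently the square of the dual curve class $D^\vee$ — is larger than the constant $d_0$ of Proposition~\ref{prop:hyp su K3}. The boundary case $q(D)=0$, where $D^\vee$ is isotropic, I would treat separately via the structure theory of square-zero classes.

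Assume $q(D)>0$. Since $D$ is of type $(1,1)$ so is $D^\vee$, and both stay so over any deformation keeping $D$ algebraic. Working inside the connected component of the moduli of manifolds lattice-polarized by $\langle H,D\rangle$ that contains $(X,H,D)$, and using the description of its connected components together with the monodromy-orbit classification of ample curve classes — \cite[Theorem~2.5, Corollary~2.8]{CMP19} for $K3^{[n]}$ type, \cite[Theorem~4.2]{MP} and \cite[Theorem~1.1]{KLM19} for Kummer type — and replacing the general fibre by a birational model on which $D^\vee$ becomes ample (an isomorphism in codimension one, hence immaterial for the existence of uniruled divisors in a fixed class), I would deform $(X,H,D)$ to a model on which $D^\vee$ is the standard class $h_S-2gr_n$ or $h_S-(2g-1)r_n$ on $S^{[n]}$ (resp.\ $h_A-2gr_n$ or $h_A-(2g-1)r_n$ on $K_n(A)$), with $S$ (resp.\ $A$) very general of the prescribed N\'eron--Severi lattice. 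Because $q(D^\vee)$ is large, the genus $p$ of the underlying surface exceeds the coefficient $g$ — this is the inequality $p\ge g$ from the proof of Proposition~\ref{prop:hyp su K3} — so by Bogomolov--Mumford \cite{MM82}, \cite[Section~4.1, Proposition~4.1]{CMP19} (resp.\ \cite[Proposition~1.1]{MPerr}) the class $D^\vee$ is represented by a rational curve ruling a prime uniruled divisor $D_0$ of class proportional to the transported $D$.

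Finally, the ruling curve class of $D_0$ is proportional to the curve class tracked along the whole family, hence remains of type $(1,1)$ over the entire base; so by \cite[Corollary~3.5]{CMP19}, resp.\ its reducible refinement \cite[Corollary~6.3]{Ber20}, the divisor $D_0$ deforms over the base, staying uniruled and keeping its class. Restricting to the fibre isomorphic to $X$ produces a uniruled divisor in $|mD|$ for some $m\in\N$. The main obstacle, as I see it, is the monodromy normalization: showing that the transported class $D^\vee$ can be made ample and brought to a standard form covered by the explicit constructions of \cite{CMP19,MP,MPerr} on a Hilbert-scheme or generalized Kummer model — together with the separate argument needed when $q(D)=0$.
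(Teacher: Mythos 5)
Your treatment of the case $q(D)>0$ is close in spirit to the paper's, but the paper takes a cleaner route: it works in the moduli space of lattice-polarized manifolds containing $(X,D)$, parallel-transports to a Picard rank one member $\gamma(X)$ --- where $\gamma(D)$ is automatically ample, with no need for birational models or wall-crossing --- invokes Theorem~\ref{thm:to zal} there, and deforms the resulting ruling curve back via \cite[Proposition 3.1]{CMP19}. This also sidesteps a real problem in your reduction: you rescale $D$ so that $q(D)>d_0$ in order to feed it to Hypothesis~\ref{hyp:curve che rigano} and to the monodromy classification \cite[Corollary 2.8]{CMP19}, \cite[Theorem 4.2]{MP}, but both of these apply only to \emph{primitive} classes, and rescaling destroys primitivity; a primitive $D$ with small positive square is not covered by your argument. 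Theorem~\ref{thm:to zal} has no lower bound on the square of the polarization (its proof regenerates an explicit divisor from a Hilbert-scheme fibre rather than applying the Hypothesis to $H^\vee$ itself), which is why the paper routes through it.

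The genuine gap is the case $q(D)=0$, which you explicitly defer and which is the substantive new content of the proposition. No rescaling or birational modification helps here: a square-zero curve class is never ample, so Hypothesis~\ref{hyp:curve che rigano} can never be applied to $mD^\vee$, and there is no Picard rank one specialization on which $D$ becomes a polarization. The paper's solution is a different construction: after reducing to $D$ nef (\cite[Proposition 5.6]{Mar11}) and fixing the saturated lattice $L=\langle D,H\rangle$, it uses the density result \cite[Theorem 3.13]{MPdens} to deform within the $L$-polarized moduli space to a point $\gamma(X)$ carrying a prime exceptional divisor $E$ with $q(\gamma(D),E)>0$. Writing $m\gamma(D)^\vee=(m\gamma(D)^\vee-R)+R$, where $R$ rules $E$ and $m\gamma(D)^\vee-R$ is ample for $m\gg 0$, one applies Proposition~\ref{prop:hyp su K3} to the ample summand, attaches the rational tail $R$, and deforms the connected curve of class $m\gamma(D)^\vee$ back to $(X,D)$ using \cite[Corollary 6.3]{Ber20}. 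Without an argument of this kind (acquiring an exceptional tail by specialization), the square-zero half of the statement remains unproved.
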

\begin{proof}
    The proof is analogous to Theorem \ref{thm:to zal}, with an extension to the case of square zero classes.
    If $D$ has positive square, instead of the moduli space of polarized IHS manifolds we consider the moduli space $\sM$ of lattice polarized IHS manifolds such that $\Pic(X)$ contains a divisor of square $q(D)$, and pick the connected component containing $(X,D)$. Let us choose a parallel transport operator $\gamma$ on $\sM$ such that $\gamma(X)$ has Picard rank $1$. Therefore, $\gamma(D)$ is ample on $\gamma(X)$. By Theorem \ref{thm:to zal}, a multiple of $\gamma(D)$ is uniruled by a rational curve $\gamma(C)$, which has class proportional to $\gamma(D)^\vee$. Therefore by \cite[Proposition 3.1]{CMP19}, $\gamma(C)$ deforms in its Hodge locus, which by construction contains $(X,D)$ and we obtain a rational curve $C$ covering a multiple of $D$.
    If $q(D)=0$, we can suppose that $D$ is nef by \cite[Proposition 5.6]{Mar11}, otherwise we follow the same reasoning as above to reduce to the nef case.     As $X$ is projective, we have an ample divisor $H\in \Pic(X)$. Let $L$ be the saturated lattice generated by $D$ and $H$, and let us consider the component $\sM$ of the moduli space of $L$ lattice polarized IHS manifolds containing $(X,L)$. Inside of $\sM$, by \cite[Theorem 3.13]{MPdens} we can pick a point $\gamma(X)$ such that $\gamma(D)$ stays nef and there exists a prime exceptional divisor $E$ on $\gamma(X)$ such that $q(\gamma(D),E)>0$\footnote{By the above cited theorem, the locus where a given extra class is algebraic is dense in $\mathcal{M}$, and the locus where this class $E$ has a fixed intersection with $\gamma(D)$ is a proper Zariski closed subset of $\mathcal{M}$, therefore the locus where the intersection is positive is non-empty.}. Let $R$ be a curve ruling $E$. As $\gamma(D)$ is nef, there exists an $m\in\N$ such that $m\gamma(D)^\vee-R$ is an ample curve. Therefore, by Proposition \ref{prop:hyp su K3}, we produce a rational curve $C$ of class $m\gamma(D)^\vee-R$ which rules an ample divisor, and attach to it a rational tail $R$, so that the connected curve $C+R$ of class $m\gamma(D)^\vee$ rules a divisor and deforms in its Hodge locus by \cite[Corollary 6.3]{Ber20}. By construction, this Hodge locus contains $(X,D^\vee)$, and the result follows.    
\end{proof}

To prove Theorem \ref{thm:density}, we will use the following result of Chen and Lewis on $K3$ surfaces. Let $\sF_g$ be the moduli space of polarized genus $g$ K3 surfaces, and let $\sS_g$ be the universal surface over $\sF_g$. Let $\sC_{g,n}$ be the scheme of relative dimension one  whose fibre over a point $(S,L)\in \sF_g$ consists of all irreducible rational curves contained in $|nL|$. Recall the following result. 
\begin{theorem}[Theorem 1.1, \cite{CL13}]\label{thm:CL13}
    The set $\cup_{n\in\N} \sC_{g,n}$ is dense in the strong topology inside $\sS_g$, for all $g\geq 2$.
\end{theorem}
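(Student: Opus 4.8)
\medskip

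\noindent\emph{A proof proposal.} The plan is to reduce the universal statement to a fibrewise density statement over a strong--dense subset of the base $\sF_g$, and then to establish that fibrewise statement on elliptic K3 surfaces with a section of infinite order, where a Mordell--Weil translation argument produces the density. For the reduction, observe that $\sS_g\to\sF_g$ is a locally trivial fibre bundle in the strong topology over the smooth locus of $\sF_g$; hence it suffices to exhibit a strong--dense subset $T\subset\sF_g$ such that for every $(S,L)\in T$ the union of all irreducible rational curves contained in $\bigcup_{n\ge 1}|nL|$ is strong--dense in $S$. Indeed, given $(S_0,L_0,p_0)\in\sS_g$ and $\veps>0$, one first approximates $(S_0,L_0,p_0)$ by some $(S_1,L_1,p_1)$ with $(S_1,L_1)\in T$, and then approximates $p_1$ by a point $q_1$ lying on a curve parametrised by some $\sC_{g,n}$; the point $(S_1,L_1,q_1)\in\sC_{g,n}$ then lies within $2\veps$ of $(S_0,L_0,p_0)$.

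For $T$ I would take the set of $(S,L)\in\sF_g$ such that $S$ carries an elliptic fibration $\pi\colon S\to\P^1$ with irreducible fibres, a section $\sigma_0$, and a section $\tau$ of infinite order in the Mordell--Weil group. This $T$ is a countable union of Hodge loci in $\sF_g$; it is non--empty, and since the Hodge (Noether--Lefschetz) loci of polarized K3 surfaces are dense in the strong topology in $\sF_g$, the set $T$ is strong--dense in $\sF_g$. Now fix $(S,L)\in T$ and a point $q\in S$ lying on a smooth fibre $F=\pi^{-1}(b)$. Using $\sigma_0\cap F$ as the origin of the group law on the elliptic curve $F$, the points $\{(k\tau)\cap F:k\in\Z\}$ form an infinite non--torsion subgroup of $F$, hence a strong--dense subset of $F$; as $F$ varies over the dense locus of smooth fibres, the sections $\sigma_{k\tau}$ for $k\in\Z$ --- each an embedded $\P^1$, hence a rational curve --- therefore pass through a strong--dense subset of $S$. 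This is the source of the density; what remains is to replace $\sigma_{k\tau}$ by an irreducible rational curve lying in a multiple of $L$ and still passing near the same point.

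To carry out this replacement, fix $k$ so that $\sigma_{k\tau}$ passes within $\veps$ of $q$, choose $m$ large enough that $\alpha:=mL-[\sigma_{k\tau}]$ is ample (hence effective), and, by Bogomolov--Mumford, pick a connected curve $R_1\in|\alpha|$ all of whose components are rational; after a genericity argument one may assume $\sigma_{k\tau}$ is not a component of $R_1$ and that $R_0:=\sigma_{k\tau}\cup R_1\in|mL|$ is a connected reduced nodal curve of arithmetic genus $\gamma:=p_a(mL)$, one of whose components passes within $\veps$ of $q$. One then smooths a suitable set of $\kappa-1$ of its nodes (where $\kappa$ is the number of components), namely those forming a spanning tree of the graph whose vertices are the components, while retaining the remaining $\gamma$ nodes; this produces a flat family in $|mL|$ whose general member is an irreducible curve with $\gamma$ nodes, hence of geometric genus $0$, and which for small parameter still passes within $2\veps$ of $q$. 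Together with the reduction above, this yields the theorem. The crux --- and the point at which one must invoke the deformation theory of nodal curves on K3 surfaces (Xi Chen and others) --- is precisely this last smoothing step: one needs that the relevant Severi--type locus of curves in $|mL|$ on the specific surface $S$ is smooth of the expected dimension at $[R_0]$, so that the prescribed nodes can genuinely be smoothed inside the linear system while the curve stays $C^{0}$--close to $R_0$ and becomes irreducible. Arranging this uniformly, together with the lattice bookkeeping realizing a multiple of $L$ through a section meeting an arbitrary neighbourhood of the target point, is the main obstacle.
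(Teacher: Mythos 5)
You should first note that the paper does not prove this statement at all: it is quoted verbatim from Chen--Lewis \cite{CL13} (their Theorem 1.1) and used as a black box in the proof of Corollary \ref{cor:ratchain}, so there is no internal proof to compare against. That said, your outline does track the actual strategy of \cite{CL13} --- reduce to a strong-dense set of special fibres via Noether--Lefschetz density, produce density on elliptic K3 surfaces with a non-torsion section, then convert sections into irreducible rational curves in $|mL|$ by attaching rational components and smoothing --- so the architecture is sound. Two steps, however, are genuinely gapped.

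First, the asserted ``source of the density'' is false as stated: an infinite cyclic subgroup of an elliptic curve generated by a non-torsion point need \emph{not} be dense in the strong topology. The closure of $\{k t : k\in\Z\}$ in $\C/\Lambda$ is a closed subgroup whose identity component can be a real one-dimensional subtorus; for instance the class of $1/\pi$ in $\C/(\Z+i\Z)$ is non-torsion but generates a subgroup dense only in the circle $\R/\Z$. So density can fail on every individual fibre $F$, and the union $\bigcup_k \sigma_{k\tau}$ could a priori accumulate on a real three-dimensional subset of $S$. To repair this you must show that the set of base points $b$ for which $\tau(b)$ satisfies a rational linear relation with the periods of $F_b$ is a countable union of proper real-analytic subsets, which forces you to use the non-isotriviality of the elliptic fibration (variation of the period map); this equidistribution argument is an essential ingredient, not a triviality. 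Second, the smoothing step is invoked on a surface $S$ that is by construction \emph{special} --- it lies in a Noether--Lefschetz locus, carrying the fibre class and the sections --- so the regularity of Severi-type loci of nodal rational curves in $|mL|$, which is known for the \emph{generic} polarized K3, does not apply off the shelf; moreover you must smooth while imposing the condition of passing near $q$ and while keeping the curve irreducible, which requires a dimension count you have not supplied. You correctly flag this as the main obstacle, but it is precisely the content of the bulk of \cite{CL13}, so the proposal as written does not constitute a proof.
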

From this one easily obtains the following.  
\begin{corollary}\label{cor:ratchain}
    Let $S$ be a general projective $K3$ surface. Then any pair of points on $S^{[n]}$ can be arbitrarily approximated by a chain of at most $2n$ rational curves, each of which deforms in a family covering a divisor.
\end{corollary}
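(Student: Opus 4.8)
The plan is to reduce the statement, via Theorem~\ref{thm:CL13}, to moving the $n$ points of a length‑$n$ subscheme one at a time along rational curves of $S$. Since $S$ is general we may assume $\Pic(S)=\Z L$, so that any two irreducible curves on $S$ meet; and Theorem~\ref{thm:CL13}, applied over the general point of $\sF_g$, tells us that on $S$ the union of all irreducible rational curves is dense in the strong topology. Fix $Z_1,Z_2\in S^{[n]}$ and $\eps>0$. First I would approximate each $Z_i$ to within $\eps$ by a \emph{reduced} subscheme with pairwise distinct support points — possible since the ``big diagonal'' in $S^{[n]}$ has positive codimension — and then perturb each support point onto an irreducible rational curve of $S$. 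This yields $\wt Z_1=\{p_1,\dots,p_n\}$ and $\wt Z_2=\{q_1,\dots,q_n\}$, still $\eps$‑close to $Z_1$ and $Z_2$, with distinct support, together with irreducible rational curves $C_1,\dots,C_n$ and $C_1',\dots,C_n'$ on $S$ such that $p_i\in C_i$ and $q_i\in C_i'$ for each $i$.

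Next I record the basic construction. For an irreducible curve $\Gamma\subset S$, an index $k\in\{1,\dots,n\}$ and points $w_1,\dots,w_{n-1}\in S$, let $\Gamma[w,k]\subset S^{[n]}$ be the image of the morphism $\P^1\to S^{[n]}$ sending $t$, running through the normalisation of $\Gamma$, to the length‑$n$ subscheme whose $k$‑th point is the image of $t$ and whose other points sit at $w_1,\dots,w_{n-1}$; this is a genuine morphism, and hence $\Gamma[w,k]$ an irreducible rational curve, because $S^{[n]}$ is projective, so the rational map $\P^1\dashrightarrow S^{[n]}$ extends across the finitely many $t$ at which the moving point collides with one of the $w_j$. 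Every such $\Gamma[w,k]$ lies in $D_\Gamma:=\{Z\in S^{[n]}:\supp(Z)\cap\Gamma\neq\emptyset\}$, and as $w$ varies these curves cover $D_\Gamma$; for $\Gamma$ rational, $D_\Gamma$ is precisely the uniruled divisor appearing in the proof of Theorem~\ref{thm:to zal}, so each $\Gamma[w,k]$ deforms in a family covering a divisor.

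Finally I would build the chain. Set $W_i=\{q_1,\dots,q_i,p_{i+1},\dots,p_n\}$, so $W_0=\wt Z_1$ and $W_n=\wt Z_2$, let $w^{(i)}=(q_1,\dots,q_{i-1},p_{i+1},\dots,p_n)$, and choose $x_i\in C_i\cap C_i'$, which is nonempty since $\Pic(S)=\Z L$. Put $\Gamma_i=C_i[w^{(i)},i]$ and $\Gamma_i'=C_i'[w^{(i)},i]$. Then $\Gamma_i$ contains $W_{i-1}$ and the configuration with the $i$‑th point at $x_i$; $\Gamma_i'$ contains that configuration and $W_i$; and $\Gamma_i'$ and $\Gamma_{i+1}$ both contain $W_i$. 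Hence
\[
\Gamma_1,\ \Gamma_1',\ \Gamma_2,\ \Gamma_2',\ \dots,\ \Gamma_n,\ \Gamma_n'
\]
is a connected chain of at most $2n$ irreducible rational curves running from $\wt Z_1$ to $\wt Z_2$, each deforming in a family covering a divisor; since $\wt Z_i$ is $\eps$‑close to $Z_i$, the corollary follows. (When $p_i=q_i$, or when $p_i$ and $q_i$ already lie on a common irreducible rational curve, one of $\Gamma_i,\Gamma_i'$ may be dropped, which is why the bound is ``at most''.)

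The one genuinely non‑formal point is the very first step: upgrading the density of rational curves in the \emph{total space} $\sS_g$ provided by Theorem~\ref{thm:CL13} to density of rational curves in a \emph{single} general $K3$ surface (which one expects since $\sS_g\to\sF_g$ is smooth, so a dense subset of $\sS_g$ should meet the general fibre densely). Everything afterwards is a matter of deciding which point travels along which curve and invoking projectivity of $S^{[n]}$ to legitimise the collisions.
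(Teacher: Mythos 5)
Your proof is correct and is essentially the same as the paper's: both use Theorem~\ref{thm:CL13} to place the $n$ support points of (reduced approximations of) the two subschemes on irreducible rational curves, move one point at a time along a pair of such curves that meet (you justify the intersection via $\Pic(S)=\Z L$, the paper via ampleness — the same fact on a general $K3$), and observe that each resulting curve in $S^{[n]}$ sweeps out the incidence divisor $D_\Gamma$. The density-in-the-general-fibre point you flag is used in exactly the same implicit way by the paper, so there is no substantive difference.
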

\begin{proof}
    Without loss of generality, we can suppose that the two points $\xi_i,\, i\in\{1,2\}$ correspond to reduced subschemes, and that $\supp(\xi_1)\cap \supp(\xi_2)=\emptyset$ otherwise we can take  arbitrarily close approximations by reduced subschemes with such property. Therefore we write 
    $$\xi_i=p^i_1+\dots p^i_n,$$
    with $p^i_1,\ldots, p^i_n$ distinct points on $S$ for $i=1,2$.
    By Theorem \ref{thm:CL13}, we have two ample irreducible curves $R^1_1, R^2_1$ arbitrarily near $p_1^1$ and $p_1^2$ respectively. As these curves are ample, the rational curve $R_1=R^1_1\cup R^2_1$ is connected. Let us consider the rational curve $R_1+p^1_2+\dots p^1_n$ inside $S^{[n]}$: this can be used to approximate the subschemes $p^1_1+p^1_2+\dots p^1_n$ and $p^2_1+p^1_2+\dots p^1_n$. Iterating the argument, one obtains a rational curve (union of two irreducible ample curves) $R_j$ for all $j\in\{1,\dots n\}$ which approximates the two points $p^1_j$ and $p^2_j$. Considering the curve $p^2_1+\dots +p^2_{j-1}+R_j+p^1_{j+1}+\dots + p^1_{n}$ one can approximate the points $p^2_1+\dots +p^2_{j-1}+p^1_j+p^1_{j+1}+\dots + p^1_{n}$ and $p^2_1+\dots +p^2_{j-1}+p^2_j+p^1_{j+1}+\dots + p^1_{n}$. Therefore, by taking the union of these curves we obtain a chain of $2n$ rational irreducible curves which approximate the two points $\xi_1$ and $\xi_2$. By construction, each of these rational curves $C$ deforms in a family which covers the divisor $\{Z\in S^{[n]},\text{ such that } \supp(Z)\cap C\neq\emptyset\}$ and the corollary follows.
\end{proof}

   \begin{proof}[Proof of Theorem \ref{thm:density}]
Let $X$ be a very general IHS manifold in $\sM$. Let $x_1,x_2 \in X$ be two points on it.  
Thanks to \cite[Corollary 1.2]{MPdens} we can pick a point in $\sM$ which parametrizes the punctual Hilbert scheme of a very general projective $K3$ $(S,H)$  arbitrarily close to $X$ and two points $\xi_1, \xi_2\in S^{[n]}$ approximating $x_1$ and $x_2$ respectively.
We take the chain $R$ of $2n$ rational curves approximating $\xi_1$ and $\xi_2$ given by Corollary \ref{cor:ratchain}. 
 We can now apply the Regeneration principle \ref{regprin} to regenerate the union of the divisors ruled by the deformations of the irreducible components of $R$ to obtain a chain of rational curves on $X$ satisfying the statement. 
 
\end{proof}
\begin{remark}
    Actually, using \cite[Theorem 5.5]{CGL} and the Regeneration principle, a simpler version of the proof above yields the existence of infinitely many uniruled divisors for the very general point of {\it any} family $\sX\to B$ of projective IHS manifolds such that one of the fibres is the Hilbert scheme over a $K3$ of odd Picard rank.
\end{remark}

\begin{proof}[Proof of Theorem \ref{thm:bir}]
    To prove the theorem we will  show the existence of an ample uniruled divisor with infinite $\Bir(X)$-orbit.     By \cite[Theorem 1.1]{Og06}, as $\Bir(X)$ is infinite, there exists an element $g\in \Bir(X)$ of infinite order. Let $D$ be an ample uniruled divisor, whose existence is granted by Theorem \ref{thm:to zal}. We claim that the orbit of $D$ via $g$ is infinite, as otherwise a multiple of $g$ would give an isometry of the lattice $D^\perp\subset \NS(X)$.  The latter is negative definite as $D$ is ample and has therefore finite isometry group. Hence $g$ would act with finite order on both $D$ and $D^\perp$, which is absurd  and the claim follows.
    \end{proof}

We recall now the following well-known result for the reader's convenience. This tells us that Theorem \ref{thm:bir} yields its conclusion only for a codimension at least one locus in the moduli space of projective IHS manifolds. 
\begin{lemma}\label{lem:folk}
    Let $X$ be a projective IHS manifold with $\rho(X)=1$. Then $\Aut(X)=\Bir(X)$ and it is a finite group.
\end{lemma}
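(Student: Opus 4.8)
The plan is to prove the two assertions separately: first that $\Aut(X) = \Bir(X)$ when $\rho(X) = 1$, and then that this group is finite.

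For the equality $\Aut(X)=\Bir(X)$, I would recall the standard fact that on an IHS manifold every birational self-map is an isomorphism in codimension one, hence induces a Hodge isometry of $H^2(X,\Z)$ preserving the Beauville--Bogomolov form and, crucially, preserving the birational K\"ahler cone (the union of pullbacks of K\"ahler cones over all birational models), by work of Huybrechts. When $\rho(X)=1$, the positive cone is a half-line, so $\NS(X)_\R$ has only two chambers and the nef cone of $X$ itself already fills the whole positive side; in particular the movable cone and the nef cone coincide and there are no nontrivial birational models contracting anything. Therefore any $g \in \Bir(X)$ preserves the ample cone, and by a further standard result (e.g. the fact that a birational map between IHS manifolds that preserves an ample class is an isomorphism) $g$ extends to a biregular automorphism. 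This gives $\Bir(X) \subseteq \Aut(X)$; the reverse inclusion is trivial.

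For finiteness, I would pass to the representation $\Aut(X) \to \O(H^2(X,\Z))$, whose kernel is finite (it acts faithfully on $H^0(X,\Omega^2_X)$ up to a finite group, by a Fujiki-type argument / the fact that automorphisms acting trivially on $H^2$ form a finite group for IHS manifolds). It then suffices to show the image is finite. The image lies in the subgroup of isometries of $\NS(X) \oplus T(X)$ (Néron--Severi plus transcendental lattice) preserving the ample cone. Since $\rho(X)=1$, $\NS(X) = \Z H$ with $q(H)>0$, so any isometry fixes $H$ up to sign and must fix $H$ itself to preserve ampleness — so the action on $\NS(X)$ is trivial. On the transcendental lattice $T(X)$, which carries an irreducible Hodge structure of weight $2$ with $h^{2,0}=1$, the image is a finite cyclic group by the classical argument (an isometry acts on the line $H^{2,0}$ by a root of unity, and Hodge-theoretic/arithmetic constraints — eigenvalues are algebraic integers all of whose conjugates have absolute value one, hence roots of unity, together with the discreteness of $\O(T(X))$) force finite order; more directly, one invokes the known finiteness of the group of Hodge isometries of $T(X)$ for a very general situation, or simply cites that $\Aut(X)$ embeds into a discrete group preserving a rational polyhedral fundamental domain for the ample cone, which here is the whole positive cone, forcing finiteness).

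The main obstacle is organizing the last step cleanly: $\rho(X)=1$ makes the ample cone \emph{not} rationally polyhedral in the naive sense (it is a half-line, i.e. the whole positive cone), so the usual Morrison--Kawamata cone-conjecture packaging does not directly apply; instead one must argue via the transcendental lattice and the weight-two Hodge structure. I would therefore lean on the well-documented statement that for an IHS manifold the automorphism group is finite as soon as the movable cone equals the nef cone and is not "too degenerate", or cite directly the literature (e.g. results of Oguiso, or Boissi\`ere--Cattaneo--Nieper-Wi\ss kirchen--Sarti) where precisely this $\rho=1$ statement is recorded, since the lemma is explicitly labelled "well-known" and only needs a reference-style proof rather than a from-scratch argument.
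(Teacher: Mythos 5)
Your proof of the equality $\Aut(X)=\Bir(X)$ is essentially the paper's: a birational self-map acts on $\NS(X)=\Z H$ preserving positivity, hence fixes the ample class, and a birational map of IHS manifolds preserving an ample class extends to an isomorphism. For finiteness, however, you take a genuinely different route. The paper argues via Lie-group theory: since $\rho(X)=1$ forces every automorphism to fix the K\"ahler class $H$, Fujiki's theorem (\cite[Theorem 4.8]{Fuj78}) shows $\Aut(X)$ has finitely many connected components, and $h^0(X,T_X)=h^0(X,\Omega^1_X)=0$ makes the group discrete, so it is finite. You instead argue via the cohomology representation: the kernel of $\Aut(X)\to\O(H^2(X,\Z))$ is finite, the image acts trivially on $\NS(X)$ by rank-one and ampleness considerations, and the induced group of Hodge isometries of the transcendental lattice is finite (cyclic) by the Kronecker/Zarhin-type argument. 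Both are correct and standard; the paper's version is shorter and avoids any lattice theory, needing only the two classical facts about compact K\"ahler/IHS manifolds, while yours gives more structural information (e.g. that the action factors through a finite cyclic group on the transcendental part) at the cost of invoking finiteness of the kernel of the $H^2$-representation and the root-of-unity argument, both of which you would need to source properly. Your closing worry about the Morrison--Kawamata cone conjecture is a red herring you correctly sidestep; note that the Fujiki route dissolves it entirely, since fixing a single K\"ahler class is all that is needed.
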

\begin{proof}
First of all recall that a birational map between two IHS manifolds sending an ample class into an ample class can be extended to an isomorphism. As such, when $\rho(X)=1$, we have $\Aut(X)=\Bir(X)$.     By \cite[Theorem 4.8]{Fuj78}  the group of automorphisms of a compact K\"ahler manifold that fix a K\"ahler class has only finitely many connected components. On the other hand the group of automorphisms of an IHS manifold $X$ is discrete, since $h^0(X,T_X)=h^0(X,\Omega^1_X)=0$. Hence $\Aut(X)$ must be finite. 
\end{proof}

 \bibliography{literature}
\bibliographystyle{alpha}

\end{document}